\let\oldtocsection=\tocsection
\let\oldtocsubsection=\tocsubsection
\let\oldtocsubsubsection=\tocsubsubsection
 \renewcommand{\tocsection}[2]{\hspace{0em}\oldtocsection{#1}{#2}}
\renewcommand{\tocsubsection}[2]{\hspace{2em}\oldtocsubsection{#1}{#2}}
\renewcommand{\tocsubsubsection}[2]{\hspace{4em}\oldtocsubsubsection{#1}{#2}}
\renewcommand{\@seccntformat}[1]{ {\csname
the#1\endcsname}.\hspace{0.2em}}
\def \ch {{\mbox{ch}}}
\def \rad {{\mbox{rad}}}
\def \Ad {{\mbox{Ad}}}
\def \C {{\mathbb C}}
\def\E{\mathbb E}
\def \R {{\mathbb R}}
\def \N {{\mathbb N}}
\def \1{{\mathds{1}}}
 \newtheorem{theo}{Theorem}[section]
\newtheorem{lemm}[theo]{Lemma} 
\newtheorem{prop}[theo]{Proposition}
\newtheorem{coro}[theo]{Corollary}
\newtheorem{defi}[theo]{Definition}
\newtheorem*{theo*}{Theorem}
\begin{document}
 \title[Brownian sheet and time inversion]{ Brownian sheet and time inversion\\From  $G$-orbit to $L(G)$-orbit}

\author{Manon Defosseux}
\maketitle 

 \begin{abstract}  We have proved in a previous paper that a space-time Brownian motion   conditioned to remain in a Weyl chamber associated to an affine Kac--Moody Lie algebra     is distributed as the radial part process of a Brownian sheet on the  compact real form of the underlying finite dimensional Lie algebra, the radial part being defined considering the coadjoint action of a loop group   on the dual of a centrally extended loop algebra. We present here a very brief proof of this result based on a time inversion argument and on    elementary  stochastic differential calculus.
  \end{abstract} 
 \section{Introduction} 
We propose here a short proof of the main result of \cite{defo3}. Let us briefly recall  this result. For this we need to consider a connected simply connected  simple compact Lie group $G$  and its Lie algebra $\mathfrak g$ equipped with an invariant scalar product  for the adjoint action of $G$ on $\mathfrak g$. One considers a standard Brownian sheet $\{x_{s,t}, s\in [0,1], t\ge 0\}$ with values in $\mathfrak g$ and for each $t>0$,  the process $\{Y_{s,t}, s\in [0,1]\}$ starting from the identity element of $G$ and  satisfying the stochastic differential equation (in $s$) 
$$t\, dY_{s,t}=Y_{s,t}\circ dx_{s,t},$$
where $\circ$ stands for the Stratonovitch integral. This is a  $G$-valued process. The adjoint orbits in $G$ are in correspondence with an alcove which is a fundamental domain for the action   on a Cartan subalgebra $\mathfrak t$ of $\mathfrak g$ of the extended  Weyl group associated the roots of G. We have proved in \cite{defo3} that if for any $t>0 $ one denotes by $\mathcal O(Y_{1,t})$ the element in the alcove corresponding to the orbit of $Y_{1,t}$ then   the random process\footnote{with the convention that $Y_{1,0}$ is  the identity element of $G$} 
$$\{(t,t\mathcal O(Y_{1,t})):t\ge 0\}$$ is a space-time brownian motion  in $\R\times \mathfrak t$ conditioned in Doob's sense to remain in a Weyl chamber which occurs in the framework of affine Kac--Moody    algebras \cite{Kac}. The proof of \cite{defo3} rests on a Kirillov--Frenkel character formula \cite{fre} from which follows an  intertwining  relation between the transition probability semi-group  of the Brownian sheet and the one of the conditioned process. Then a Rogers and Pitman's criteria \cite{pitmanrogers} can be applied, which provides the result.  The conditioned process obtained when $G=\mbox{SU}(2)$ plays a crucial role in \cite{boubou-defo} where a Pitman type theorem is proved for a real Brownian motion  in the unit interval. Time inversion is a key ingredient to get the Pitman type theorem in this case. In the present communication   a new proof of the main result of \cite{defo3} is proposed, which rests on such a time inversion firstly and secondly on an elementary but  nice property of the Brownian sheet  on $\mathfrak g$ and its wrapping on $G$.

The results presented    are valuable for themselves rather than for their proofs which are rudimentary. We present them  in section \ref{results} before giving the precise definitions of the objets that they involve.  The rest of the communication is organized as follows. In section \ref{Action orbits}  we recall the general framework of \cite{defo3}.  In particular we describe the coadjoint orbits of the loop group $L(G)$ in     the dual of the centrally extended loop algebra $L(\mathfrak g)$ and the Weyl chamber associated to such an infinite dimensional  Lie algebra which is an affine Kac--Moody algebra. In section \ref{LG and B} we define the radial process associated to the Brownian sheet on $\mathfrak g$ and recall the main theorem of \cite{defo3}. In  section \ref{O and B} we define two Doob conditioned processes   living respectively  in an alcove or in an affine  Weyl chamber, and  prove that the two  processes are equal up to a time inversion. Finally in section \ref{New proof} we propose a brief proof of the main result of \cite{defo3}.
\section{Statement of the results}\label{results}
Let us fix $\gamma$ in a fixed alcove associated to $G$ and consider $\{X^\gamma_{s,t}: s\in [0,1],t\ge 0\}$ a random sheet with values in  $G$, such that for any $t\ge 0$, 
\begin{align*}
\left\{
    \begin{array}{ll}
       X^\gamma_{s,t}=X_{s,t}\circ d(x_{s,t}+\gamma s)   \\
      X^\gamma_{0,t}=e,
    \end{array}
\right.
\end{align*}
where $e$ is the identity element of $G$. Then one has the three following statements, the second one being an immediate consequence of the first, and the last one being deduced from the second  by a time inversion argument.\\
 \\
 {\bf Satement 1 :} \\
The random process $\{X_{1,t}^\gamma : t\ge 0\}$ is a standard Brownian motion on $G$ starting from $\exp(\gamma)$.\\
\\
  {\bf Satement 2 :} \\
The random process $\{\mathcal{O}(X_{1,t}^\gamma ): t\ge 0\}$ is a standard Brownian motion  starting from $\gamma$ conditioned to remain in the alcove.\\
\\
  {\bf Satement 3 :}  \\
The radial part process $\{\rad(t\Lambda_0+\int_0^1(\cdot\vert d(x_{s,t}+\gamma st)): t\ge 0\}$
  is a space-time Brownian motion   with drift $\gamma$ conditioned to remain in an affine Weyl chamber.

 \section{Loop group and its orbits}\label{Action orbits}
 In this part we fix succinctely the general framework of the results.  One can find more details in \cite{defo3} and references therein for instance. Let $G$ be  a connected simply connected  simple compact Lie group and  $\mathfrak g$  its Lie algebra equipped with a Lie bracket denoted by $[\,\cdot\,,\cdot\,]_{\mathfrak{g}}$. We choose  a maximal torus  $T$ in $G$ and denote by  $\mathfrak t$ its Lie algebra. By compacity we suppose without loss of generality that   $G$ is a matrix Lie group. We denote by $\Ad$ the adjoint action of  $G$ on itself or on its Lie algebra  $\mathfrak g$ which is equipped with an $\Ad(G)$-invariant  scalar  product  $(\cdot\vert \cdot)$.  
 We consider the real vector space $L(\mathfrak g)$ of smooth loops defined on the unit circle $S^1$ with values in   $ \mathfrak g$,   $S^1$ being identified with $[0,1]$. We equip $L(\mathfrak g)$  with an $\Ad(G)$-invariant scalar product also denoted  by $(\cdot\vert \cdot)$ letting 
$$(\eta\vert \xi)=\int_0^1(\eta(s)\vert\xi(s))\, ds, \quad \eta,\xi\in L(\mathfrak g).$$
Equipped with the Lie bracket  $[\, \cdot\, ,\cdot\,]_{\mathfrak{g}}$  pointwise defined, $L(\mathfrak g)$ is a Lie algebra. We consider its central extension  
 $$\widetilde{L}(\mathfrak g)= {L}(\mathfrak g)\oplus \R c,$$  equipped with a Lie bracket  $[\,\cdot\,,\cdot\,]$ defined by 
 \begin{align}\label{Liehat} 
[\xi+\lambda c+ ,\eta+\mu c ]=[\xi,\eta]_{\mathfrak{g}}+ (\xi'\vert \eta)c,
\end{align}
for $\xi,\eta\in L(\mathfrak g)$, $\lambda,\mu\in \R$. 
We consider the   fundamental weight $\Lambda_0$ in   $\widetilde L(\mathfrak g)^*$ defined by 
$$\Lambda_0(L(\mathfrak g))=0, \quad \Lambda_0(c)=1,$$  the set of smooth loops $L(G)$ with values on $G$ and its  coadjoint action $\Ad^*$ on $ {L}(\mathfrak g)^*\oplus \R \Lambda_0$. It is  defined by \begin{align}\label{coadjointtilde} 
\Ad^*(\gamma)(\phi+\tau \Lambda_0)& =[\gamma.\phi-\tau (\gamma'\gamma^{-1}\vert \cdot)]+\tau\Lambda_0, \end{align} 
 for $ \gamma\in L(G), \phi\in L(\mathfrak g)^*, \tau\in \R,$
 where $(\gamma.\phi)(.)=\phi(\gamma^{-1}.\gamma)$.  We notice that the coordinate along $\Lambda_0$ of a linear form in $\widetilde L(\mathfrak g)^*$, which is called the level of the linear form, is not affected by the coadjoint action. 
\bigskip

\paragraph{\bf  Coadjoint $L(G)$-orbit.} For  $\zeta$ in $\widetilde L(\mathfrak g)^*$ we denoted by    $\widetilde{\mathcal O}_\zeta$ the coadjoint orbit $\Ad^*(L(G))\{\zeta\}$ in $\widetilde L(\mathfrak g)^*$. We have recalled in  \cite{defo3}  that roughly speaking for  $\xi$ in $ \widetilde{\mathcal{O}}_{\zeta}$,  provided that  $\zeta$   has a positive level, we find $\gamma\in L(G)$ such that $\xi=\Ad^*(\gamma)(\zeta)$ solving a differential equation.  Actually if   $\xi$ is written 
\begin{align}\label{formx}
\xi=\tau\Lambda_0+\int_0^1(\cdot\vert\dot{x}_s)\, ds,
\end{align}
with $\dot{x}\in L(\mathfrak g)$ and $\tau>0$, then   $\xi$   is in    $\widetilde{\mathcal O}_{\tau\Lambda_0+(a\vert \cdot)}$ for $a\in \mathfrak t$  if and only if
the $G$-valued function $\{X_s:s\in [0,1]\}$  starting from the identity element of $G$ and satisfying the differential equation
$$\tau d X=Xdx,$$   satisfies  $X_1\in \Ad(G)\{\exp(a/\tau)\}$ (see \cite{Segal} for  details or \cite{defo3} in which the results of \cite{Segal} are   recalled).  Thus     coadjoint orbits in the subspace of linear forms in $\tau\Lambda_0+L(\mathfrak g)^*$ written like in (\ref{formx}) are in one-to-one correspondence with the adjoint $G$-orbits in $G$. In order to parametrize these orbits, it is more convenient to consider the real roots of  $G$ rather than the infinitesimal ones. One can find for instance in chapters $5$ and $7$ of \cite{Brocker}   definitions and properties recalled in the next two paragraphs.
 
\bigskip
\paragraph{\bf Real roots.}    We consider the complexified Lie algebra $\C\otimes_\R \mathfrak g$ of $\mathfrak g$ that we denote by $\mathfrak g_\C$.  
The set of real roots is 
$$\Phi=\{\alpha\in \mathfrak t^*: \exists X\in \mathfrak{g}_\C\setminus \{0\}, \, \forall H\in \mathfrak t, \, [H,X]=2i\pi \alpha(H) X\}.$$  Suppose that $\mathfrak{g}$ is of rank $n$ and choose a set  of simple real roots $$\Pi=\{\alpha_k , \, k\in\{1,\dots,n\}\}.$$ 
We denote by 
$\Phi_+$ the set of positive real roots. The half sum of positive real roots is denoted by $\rho$. Letting for $\alpha\in \Pi$, $$\mathfrak{g}_\alpha=\{X\in \mathfrak{g}: \, \forall H\in \mathfrak{t},\, [H,X]=2i\pi\alpha(H)X\},$$ the coroot $\alpha^\vee$  of $\alpha\in \Phi$ is defined as the only vector of $\mathfrak{t}$ in $[\mathfrak{g}_\alpha,\mathfrak{g}_{-\alpha}]$ such that $\alpha(\alpha^\vee)=2$. For $\alpha\in \Pi$, one defines two transformations on $\mathfrak t$, the reflection  $s_{ \alpha^\vee}$ and the translation $t_{\alpha^\vee}$,  letting  for $x \in \mathfrak{t}$ 
$$s_{\alpha^\vee}(x)=x-\alpha(x)\alpha^\vee\, \textrm{ and } \, t_{\alpha^\vee}(x) =  x+ \alpha^\vee.$$  One  considers the Weyl group $W^\vee$ and the group $\Gamma^\vee$ respectively generated by reflections  $s_{\alpha^\vee}$ and  translations  $t_{\alpha^\vee},$ for $\alpha\in \Pi$, and the extended   Weyl group  $\Omega$ generated by $W^\vee$ and $\Gamma^\vee$. Actually $\Omega$ is the semi-direct product $W^\vee\ltimes  {\Gamma^\vee}$. A   fundamental domain for its action  on $\mathfrak{t}$ is  
\begin{align*}
A=\{x\in \mathfrak{t} : \forall \alpha\in \Phi_+, \, \, 0\le \alpha(x)\le  1\}.
\end{align*} 
$ $ 
 
\paragraph{\bf Adjoint $G$-orbit.}  The group  $G$ being  simply connected, the conjugaison classes  $G/\Ad(G)$ is in correspondence with the fundamental domain   $A$. Actually   for every    $\textsf u\in G$, there exists a unique element  $x\in A$ such that      $\textsf u\in \Ad(G)\{\exp(x)\}$.  For  $\tau\in \R_+$, one defines  the alcove $A_\tau$ of level  $\tau$ by 
$$A_\tau=\{x\in \mathfrak{t} : \forall \alpha\in \Phi_+, \,\, 0\le \alpha(x)\le \tau\},$$ i.e. $A_\tau=\tau A$.   In particular $A_1=A$.

 \bigskip
 
 \paragraph{\bf Alcoves and coadjoint $L(G)$-orbit.}  For a positive real number  $\tau$ and a linear form $\xi\in \widetilde{L}(\mathfrak g)^*$ written as in (\ref{formx})  there is a unique element in $a\in A_\tau$ such that  $$X_1\in \Ad(G)\{\exp(a/\tau)\}$$ where $X=\{X_s:s\in[ 0,1]\}$ starts from the identity element $e$ of  $G$ and satisfies 
$$\tau d X=Xdx. $$ Discussion above ensures that the pair $(\tau,a)$ determines the orbit of $\xi$. Thus     coadjoint orbits in the subspace of linear forms in $\R^*_+\Lambda_0+L(\mathfrak g)^*$ written like in (\ref{formx}) are in one-to-one correspondence with  $$\{(\tau, a)\in \R^*_+\times \mathfrak t: a\in A_\tau\}.$$ This last domain can be identified (if we add it $(0,0)$) with a Weyl Chamber associated to an affine Kac--Moody algebra as it is explained in the following paragraph.

\bigskip

 \paragraph{\bf Affine Weyl chamber} From now on the scalar product on $\mathfrak g$ is normalized such that $(\theta\vert\theta)=2$.   
We denote by   $\theta$ the highest real root and we let $\alpha_0^\vee=c-\theta^\vee$. We consider  $${\widehat{\mathfrak h}}=\mbox{Vect}_\C\{\alpha^\vee_0,\alpha_1^\vee,\dots,\alpha^\vee_n,d\} \, \textrm{ and } \,{ \widehat{\mathfrak h}}^*=\mbox{Vect}_\C\{\alpha_0,\alpha_1,\dots,\alpha_n,\Lambda_0\},$$  where $\alpha_0=\delta-\theta$ and for  $i\in\{0,\dots,n\}$
 $$\alpha_i(d)=\delta_{i0}, \quad \delta(\alpha_i^\vee)=0, \quad \Lambda_0(\alpha_i^\vee)=\delta_{i0},\quad \Lambda_0(d)=0.$$ 
 We let 
 $$\widehat{\Pi }=\{\alpha_i: i\in\{0,\dots,n\}\} \, \textrm{ and } \, \widehat{\Pi }^\vee=\{\alpha^\vee_i: i\in\{0,\dots,n\}\}.$$ Then $(\widehat{{\mathfrak h}},\widehat{\Pi},\widehat{\Pi}^\vee)$ is a    realization of a generalized Cartan matrix of   affine type. These objects are studied in details in   \cite{Kac}.    The following definitions mainly  come from chapters $1$ and $6$.   We consider the  restriction of $(\cdot\vert\cdot)$  to  $\mathfrak t$ and extend it to $\widehat{\mathfrak h}$ by $\C-$linearity   and   by letting 
$$(\C c+\C d\vert\,\mathfrak t)=0,\quad (c\vert c)=(d\vert d)=0, \quad (c\vert d)=1.$$  
 Then the linear isomorphism 
\begin{align*}
\nu:\, \,&\widehat{\mathfrak{h}}\to \widehat{\mathfrak{h}}^* \\
& h\mapsto (h\vert \cdot)
\end{align*} identifies $\widehat{\mathfrak{h}}$ and $\widehat{\mathfrak{h}}^*$. We still denote   $(\cdot\vert\cdot)$ the induced bilinear form     on $\widehat{\mathfrak{h}}^*$. We record that
\begin{align*}
&(\delta\vert \alpha_i)=0,\quad  i=0,\dots,n,\quad (\delta\vert\delta)=0,\quad (\delta\vert\Lambda_0)=1.
\end{align*} 
Due to the normalization we have $\nu(\theta^\vee)=\theta$ and $(\theta^\vee\vert\theta^\vee)=2$.   
We define the  affine Weyl group $\widehat{ W}$ as the subgroup  of  $\mbox{GL}({\widehat{\mathfrak{h}}}^*)$ generated by fondamental reflections   $s_{\alpha}$, $\alpha\in \widehat{\Pi}$, defined by $$s_{\alpha}(\beta)=\beta-  \beta(\alpha^\vee) \alpha,\ \quad \beta\in \widehat{{\mathfrak{h}}}^*.$$   
The bilinear form  $(\cdot\vert \cdot)$ is $\widehat{ W}$-invariant.
The affine Weyl group $\widehat{ W}$  is equal to the semi-direct product $ {W}\ltimes  {\Gamma}$, where $  W$ is the Weyl group of $G$ generated by $s_{\alpha_i}$, $i\in \{1,\dots,n\}$, and  $\Gamma$ the group of translations   $t_{\alpha}$, $\alpha\in \nu(  Q^\vee)$, defined by 
\begin{align} \label{transa}
t_\alpha(\lambda)=\lambda+\lambda(c)\alpha-\big[(\lambda\vert \alpha)+\frac{1}{2}(\alpha\vert\alpha)\lambda(c)\big]\delta, \quad \lambda\in \widehat{{\mathfrak{h}}}^*.
\end{align}
Identification of $\widehat{\mathfrak h}$ and $\widehat{\mathfrak h}^*$  via $\nu$ allows to define an action of $\widehat W$ on $\widehat{\mathfrak h}$. One lets 
  $wx=\nu^{-1}w\nu x$, for $w\in \widehat W$, $x\in \widehat{\mathfrak h}$. Then the action of $\widehat W$ on $\Lambda_0\oplus \mathfrak t^*\oplus \R\delta/\R \delta$ or $d\oplus \mathfrak t\oplus \R c/\R c$ is identified to the one of $\Omega$ on $\mathfrak t$.  Moreover a   fondamental domain for the action of $\widehat{ W}$ on the quotient space $(\R_+\Lambda_0+ \mathfrak t^*+ \R\delta)/\R\delta$ is 
  $$\{\lambda\in \R\Lambda_0\oplus \mathfrak t^*: \lambda(\alpha^\vee)\ge 0, \alpha\in\widehat{ \Pi}\},$$ and for $\tau\ge 0$,    $\tau\Lambda_0+\phi_a$, with $\phi_a=(a\vert \cdot)$, is in    this fundamental domain  if and only if   $a\in A_\tau$.  Then we  consider the following domain which is identified with the  fundamental affine Weyl chamber viewed in the quotient space
  $$C_W=\{(\tau,x)\in \R_+\times \mathfrak t: x\in A_\tau\}.$$

\section{Coadjoint $L(G)$-orbit and Brownian motion}\label{LG and B}
When $\{x_s: s\in[ 0,1]\} $ is a continuous semi-martingale    with values in  $\mathfrak g$, then for $\tau>0$ the stochastic differential equation
 \begin{align}\label{eds}
\tau \, dX=X\circ dx,
\end{align} 
where $\circ$ stands for the Stratonovitch integral, has a unique solution starting from $e$.  Such a solution is a $G$-valued process, that we   denote by $\epsilon(\tau,x)$ \cite{karandikar},\cite{lepingle}. This is the Stratonovitch stochastic exponential of $\frac{x}{\tau}$. The previous discussion leads naturally to the  following definition.
 \begin{defi}  
For $\tau\in \R_+^*$, and $x=\{x_s: s\in[0,1]\}$ a $\mathfrak g$-valued continuous semi-martingale, we   defines the radial part of $\tau\Lambda_0+\int_0^1(\cdot\vert dx_s)$  that we denote by  $\rad(\tau\Lambda_0+\int_0^1(.\vert dx_s))$  by\footnote{We do not specify in which space lives this distribution. We use this notation here just to keep track of the fact that when $x$ is a Brownian motion the Wiener measure   provides a natural measure on a coadjoint orbit in the original work of I. B. Frenkel.} 
$$\rad(\tau\Lambda_0+\int_0^1(\cdot\vert dx_s))=(\tau,a),$$  
 where $a$ is the  unique element in $ A_\tau$  such that $$\epsilon(\tau,x)_1\in \Ad(G) \{\exp(a/\tau)\}.$$  \end{defi}
 We have proved in \cite{defo3} the following theorem, where the conditioned space-time Brownian motion is the one defined in section \ref{CBMWC}. This is this theorem for which we propose a new proof.
  \begin{theo}\label{maintheo} 
    If $\{x_{s,t}: s\in [0,1],t\ge 0\}$ is a Brownian sheet with values in $\mathfrak g$ such that  for any $a,b\in \mathfrak g$, $s_1,s_2\in [0,1]$, $t_1,t_2\in \R_+^*$, $$\E\big(( a\vert x_{s_1,t_1})( b\vert x_{s_2,t_2} )\big)=\min(s_1,s_2)\min(t_1,t_2)(a\vert b),$$ 
 then 
  $$\{\rad(t\Lambda_0+\int_0^1(\cdot\vert dx_{s,t})): t\ge 0\}$$
  is a space-time Brownian motion in $\R\times \mathfrak t$ conditioned to remain in the affine Weyl chamber $C_W$.
  \end{theo}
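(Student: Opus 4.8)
The plan is to obtain Theorem~\ref{maintheo} as the $\gamma=0$ case of the chain of Statements~1--3 of Section~\ref{results}; the argument for a general drift $\gamma$ is then word for word the same. First I would unwind the definition of $\rad$. Since $\{\epsilon(t,x_{\cdot,t})_s:s\in[0,1]\}$ is by definition the solution from $e$ of $t\,dX=X\circ dx_{\cdot,t}$, one has $\epsilon(t,x_{\cdot,t})_1\in\Ad(G)\{\exp(a_t/t)\}$ for the unique $a_t\in A_t$, so that $a_t/t=\mathcal O(\epsilon(t,x_{\cdot,t})_1)$ and
$$\rad\Big(t\Lambda_0+\int_0^1(\cdot\vert dx_{s,t})\Big)=\big(t,\,t\,\mathcal O(\epsilon(t,x_{\cdot,t})_1)\big),\qquad t>0,$$
the first coordinate being deterministic. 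It therefore suffices to identify the law on path space of $\{\,t\,\mathcal O(\epsilon(t,x_{\cdot,t})_1)\,\}_{t>0}$, which tends to $0$ as $t\to0^+$ because $\mathcal O$ takes values in the bounded alcove $A$.

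The next step uses the two elementary properties of the Brownian sheet alluded to in the introduction. The first is a time-inversion invariance: the rescaled inverted field $\tilde x_{s,u}:=u\,x_{s,1/u}$ is again a standard Brownian sheet on $\mathfrak g$, which is immediate from the covariance computation $u_1u_2\min(s_1,s_2)\min(1/u_1,1/u_2)=\min(s_1,s_2)\min(u_1,u_2)$ and the continuity at $u=0$. Combining this with the pathwise scaling identity $\epsilon(t,y)_1=\epsilon(1,y/t)_1$ of the Stratonovitch stochastic exponential, applied with $y=x_{\cdot,t}$, gives $\epsilon(t,x_{\cdot,t})_1=\epsilon(1,\tilde x_{\cdot,1/t})_1$. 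Hence, setting $Z_u:=\epsilon(1,\tilde x_{\cdot,u})_1$, the process $\{\mathcal O(\epsilon(t,x_{\cdot,t})_1)\}_{t>0}$ equals $\{\mathcal O(Z_{1/t})\}_{t>0}$, and $\{Z_u\}_{u\ge0}$ has the same law as $\{\epsilon(1,x_{\cdot,u})_1\}_{u\ge0}$, that is, as the process $\{X^0_{1,u}\}_{u\ge0}$ of Statement~1. The second property is Statement~1 itself: $\{X^0_{1,u}\}_{u\ge0}$ is a standard Brownian motion on $G$ from $e$. To prove this I would write, for $u_1<u_2$, $x_{\cdot,u_2}=x_{\cdot,u_1}+\delta$ with $\delta$ a $\mathfrak g$-valued Brownian motion in the space variable, of parameter $u_2-u_1$ and independent of the past; the variation-of-parameters formula for Stratonovitch stochastic exponentials then expresses $X^0_{1,u_2}$ as the product of $X^0_{1,u_1}$ and $\epsilon\big(1,\int_0^\cdot\Ad(X^0_{v,u_1})\circ d\delta_v\big)_1$, and the $\Ad(G)$-invariance of $(\cdot\vert\cdot)$ together with L\'evy's characterization shows that $\int_0^\cdot\Ad(X^0_{v,u_1})\circ d\delta_v$ is, conditionally on the past, a Brownian motion in $\mathfrak g$ of parameter $u_2-u_1$, hence independent of the past; by Brownian scaling of the stochastic exponential its value at $1$ is a Brownian increment on $G$ over a time interval of length $u_2-u_1$. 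Thus $\{X^0_{1,u}\}$, and with it $\{Z_u\}$, has independent stationary multiplicative increments with the law of a Brownian motion on $G$ from $e$.

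I would then invoke Statement~2: by the classical description of the radial part of a Brownian motion on a compact group recalled in Section~\ref{O and B}, the process $\{R_u:=\mathcal O(Z_u)\}_{u\ge0}$ is a Brownian motion started at $0$ and conditioned in Doob's sense to remain in the alcove $A=A_1$. Combining with the previous paragraph, $\rad(t\Lambda_0+\int_0^1(\cdot\vert dx_{s,t}))=(t,\,t\,R_{1/t})$ in law. It remains to recognise $\{(t,\,t\,R_{1/t})\}_{t\ge0}$: by the identity established in Section~\ref{O and B} between the alcove-conditioned process and the affine-Weyl-chamber-conditioned process --- morally the classical fact that $t\mapsto t\,B_{1/t}$ maps a Brownian motion to a Brownian motion, combined with the scale invariance $x\in A_u\iff u\,x\in A_1$ of the cone $C_W=\{(\tau,x):x\in A_\tau\}$ --- the process $\{(t,\,t\,R_{1/t})\}_{t\ge0}$ is exactly the space-time Brownian motion on $\R\times\mathfrak t$ conditioned to remain in $C_W$ defined in Section~\ref{CBMWC}. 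Putting the displays together proves the theorem; Statement~3 follows identically, replacing $x_{s,t}$ by $x_{s,t}+\gamma st$, the exponential $X^0$ by $X^\gamma$, and letting $\{R_u\}$ start from $\gamma$.

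The step I expect to be the main obstacle is not any single computation, all of which are elementary, but rather the bookkeeping required to carry the identifications at the level of laws on path space rather than of one-dimensional marginals: the pathwise validity and adaptedness in the variation-of-parameters argument for Statement~1, the continuity of all the processes involved at the degenerate parameters $t=0$ (the apex of the cone $C_W$, where its cross-sections collapse) and $u=\infty$, and --- most of all --- pinning down the precise meaning of ``conditioned to remain in $C_W$'', since conditioning the space-time Brownian motion to stay forever in that thin cone is a limiting Doob transform whose harmonic function is precisely where the affine Weyl group, and in the original proof the Kirillov--Frenkel character formula, is hiding. These points are exactly what Sections~\ref{CBMWC} and \ref{O and B} are there to settle.
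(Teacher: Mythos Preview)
Your proposal is correct and follows essentially the same route as the paper: Statement~1 is exactly Proposition~\ref{mainprop} (proved by the same Stratonovitch computation $dZ_s=Z_s\circ\Ad(X^\gamma_{s,t})\,d(x_{s,t+t'}-x_{s,t})$ together with $\Ad(G)$-invariance), Statement~2 is the first corollary, the alcove-to-chamber identification you invoke is Proposition~\ref{inv}, and the Brownian-sheet time inversion $\{x_{s,1/t}\}\overset{d}{=}\{\tfrac{1}{t}x_{s,t}\}$ is used in the same way. The only cosmetic difference is that you perform the sheet time inversion at the outset whereas the paper performs it at the very end, and your multiplicative increment sits on the opposite side of $X^0_{1,u_1}$ from the paper's $Z_1$ (harmless, as the paper also notes, since the $\Ad(G)$-invariance of the increment law makes the process two-sided L\'evy).
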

\section{Conditioned brownian motions} \label{O and B}
In whole the communication, when we write $f_t(x) \propto g_t(x)$ for $f_t(x),g_t(x)\in \C$, we mean that $f_t(x)$ and $g_t(x)$ are equal up to a multiplicative constant independent of the parameters $t$ and $x$. 
\subsection{A Brownian motion conditioned to remain in an alcove}\label{CBMA}
There is a common way to construct a Brownian motion conditioned in Doob sense to remain in an alcove, which is to consider at each time the $\Ad(G)$-orbit of a brownian motion in $G$. The brownian motion on $G$ is left Levy process. Its transition probability densities  $(\bold{p}_s)_{s\ge 0}$   with respect to the Haar measure on $G$    can be  expanded  as a sum of characters of   highest-weight complex representations of $G$. These representations  are in correspondence with 
$$P_+=\{\lambda\in \mathfrak t^*: \lambda(\alpha_i^\vee)\in \N, i\in \{0,\dots,n\}\}.$$ One has  for $s\ge 0$, $\textsf u,\textsf v\in G$,
\begin{align*}
\bold{p}_s(\textsf{u},\textsf v)=\bold{p}_s(e,\textsf u^{-1}\textsf v)=\sum_{\lambda\in P_{+}}\ch_\lambda(e)\ch_\lambda(\textsf u^{-1}\textsf v)e^{-\frac{s  (2\pi )^2}{2} (\vert\vert\lambda+\rho\vert\vert^2 -\vert\vert \rho\vert\vert^2)}, 
\end{align*}
where $\ch_\lambda$ is the character of the irreducible representation of  highest weight  $\lambda$  (see for instance \cite{Fegan}). By the Weyl character formula one has\footnote{The presence of a factor $2i\pi$ is due to the fact that we have considered the real roots rather than the infinitesimal ones.} for $h\in\mathfrak t$
\begin{align}\label{Weyl}
\ch_\lambda(e^{h})=\frac{\sum_{w\in W}\det(w)e^{2i\pi\langle w(\lambda+\rho),h\rangle}}{\sum_{w\in W}\det(w)e^{2i\pi\langle w( \rho),h\rangle}}.
\end{align}
 We let $$\pi(h)=\prod_{\alpha\in \Phi_+}\sin\pi  \alpha(h) , $$
which is the denominator in (\ref{Weyl}). 
Such a    process starting from $\textsf u\in G$ can be obtained considering a standard Brownian motion $\{x_s: s\ge 0\}$ with values in $\mathfrak g$, and  the solution $\{X_s:s\ge 0\}$ of the stochastic differential equation 
$$dX=X\circ dx$$ with initial condition $X_0=\textsf u$. Then $\{X_s: s\ge 0\}$ is a standard Brownian motion on $G$ starting from $\textsf u$.  If  $\textsf u=\exp(\gamma)$ with $\gamma\in A$ then   the process $\{r^\gamma_s: s\ge 0\}$ such that for any $s\ge 0$, $r^\gamma_s$ is the unique element in $A$ such that   
$$X_s\in \Ad(G)\{\exp(r^\gamma_s)\},$$ is a Markov process starting from $\gamma$ with transition probability densities  $(q_t)_{t\ge 0}$ with respect to the Haar measure on $G$ given by
\begin{align}\label{qA1}
q_t(x,y)\propto \pi(y)^2\sum_{\lambda\in P_+} \ch_\lambda(e^{-x})\ch_\lambda(e^{y})  e^{-\frac{t  (2\pi)^2}{2}(\vert\vert\rho+\lambda\vert\vert^2-\vert\vert \rho\vert\vert^2)}, 
\end{align}
for $ t\ge 0,$ $x,y\in A$. This is obtained integrating over an $\Ad(G)$-orbit (see  $(4.3.3)$ in \cite{fre} for instance) and using the Weyl integration formula.
This Markov process is actually a Brownian motion killed on the boundary of $A$ conditioned never to die. In fact if we denote by  $(u_t)_{t\ge 0}$    the transition  densities  of the standard Brownian motion  on $\mathfrak t$ killed on the boundary of $A$, a reflection principle  gives that for $t>0 $, $x,y\in A$, 
\begin{align}\label{reflA}
u_t(x,y)=\sum_{w\in \Omega}\det(w)p_t(x,w(y)),
\end{align}
where $p_t$ is the  standard heat kernel on $\mathfrak t$ and $\det(w)$ is   the determinant of the linear part of $w$. A Poisson summation formula (see \cite{Bellman} for general results, and \cite{fre} or \cite{defo3} for this particular case) then shows that 
\begin{align}\label{qA2}
q_t(x,y)\propto\frac{\pi(y)}{\pi(x)}e^{2\pi^2(\rho\vert\rho)t}u_t(x,y),
\end{align}
which is the transition probability of the killed Brownian motion conditioned in the sense of Doob to remain in $A$.  

\subsection{A space-time Brownian motion conditioned to remain in an affine Weyl chamber}\label{CBMWC}
We define a space-time Brownian motion conditioned to remain in an affine Weyl chamber as it has been defined in \cite{defo3} and also in \cite{boubou-defo} when $G=\mbox{SU}(2)$. It is defined as an $h$-process, with the help of an anti-invariant classical theta function.   
For $ \tau\in \R_+^*,$ $b\in \mathfrak t$, $a\in A_\tau$, we define $\widehat{\psi}_{b} (\tau,a)$ by 
\begin{align}\label{psi}
\widehat{\psi}_{b} (\tau,a)=\frac{1}{\pi(b)}\sum_{w\in \widehat{ W}}\det(w)e^{\langle w(\tau\Lambda_0+\phi_a),  d+b\rangle}.
\end{align}
From now on we fix $\gamma\in A$. One  considers  a standard Brownian motion $\{b_t:t\ge 0\}$ with values in $\mathfrak t$,   the space-time Brownian motion $\{B^\gamma_t=(t,b_t+\gamma t): t\ge 0 \}$, and the stopping time $T=\inf\{t\ge 0:   B^\gamma_t\notin   C_W\}$.     One defines a function $\Psi_\gamma $ on $   C_W $ by  
\begin{align}\label{Psi1}
\Psi_\gamma :(t,x)\in   C_W\to e^{-( \gamma\vert x)}\psi_{ {\gamma}}(t,x).
\end{align}
 Identity (\ref{reflA}) and decomposition $\widehat W=W\ltimes \Gamma$  implies that  
\begin{align}\label{Psi2}
\Psi_\gamma(t,x)\pi(\gamma)\propto t^{-n/2} u_{\frac{1}{t}}(\gamma,x/t)e^{\frac{t}{2}\vert\vert\gamma-x/t\vert \vert^2}
\end{align}

\begin{prop} The function $\Psi_\gamma $  is  a  constant sign  harmonic function for the killed process $\{  B^\gamma_{t\wedge T}: t\ge 0\}$, vanishing   on the boundary of  $  C_W$. 
\end{prop}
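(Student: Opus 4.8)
The plan is to read all three assertions off the closed formula (\ref{Psi2}), a Brownian time inversion being the only substantial ingredient. First I would note that $\Psi_\gamma$ is smooth on the interior of $C_W$: by interior regularity of the Dirichlet heat kernel, $u_s(\gamma,\cdot)\in C^\infty(A^{\circ})$ for $s>0$, and composing with the smooth map $(t,x)\mapsto(1/t,x/t)$ and the smooth prefactors of (\ref{Psi2}) makes $\Psi_\gamma$ smooth on the interior of $C_W$. It\^o's formula then reduces ``harmonic for the killed process $\{B^\gamma_{t\wedge T}\}$'' to the parabolic identity
\begin{align*}
\partial_t\Psi_\gamma+(\gamma\vert\nabla_x\Psi_\gamma)+\tfrac12\Delta_x\Psi_\gamma=0\qquad\text{on the interior of }C_W,
\end{align*}
the continuous vanishing on $\partial C_W$ accounting for the behaviour at the killing time $T$. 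So it remains to establish the sign, this identity, and the boundary vanishing.

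The sign and the boundary are read off directly. Up to a positive constant (here $\gamma$ lies in the open alcove, so $\pi(\gamma)>0$), $\Psi_\gamma(t,x)=t^{-n/2}u_{1/t}(\gamma,x/t)e^{\frac t2\|\gamma-x/t\|^2}$; as $(t,x)$ runs over the interior of $C_W$ the pair $(1/t,x/t)$ runs over $\R_+^*\times A^{\circ}$, where the Dirichlet heat kernel is strictly positive, and the two remaining factors are positive, so $\Psi_\gamma>0$ there. As $(t,x)\to\partial C_W$ with $t>0$ one has $x/t\to\partial A$, whence $u_{1/t}(\gamma,x/t)\to 0$ by the Dirichlet condition, the other factors staying bounded; near the apex $(0,0)$ the quantity $t^{-n/2}u_{1/t}(\gamma,x/t)$ still tends to $0$ because of the exponential decay in $1/t$ of the killed heat kernel, $x/t$ remaining bounded on $C_W$. (Alternatively, the vanishing is visible straight from (\ref{psi}): the theta sum is anti-invariant under $\widehat W$, hence vanishes on the walls forming $\partial C_W$.) Thus $\Psi_\gamma$ extends continuously to $\overline{C_W}$ by $0$ on the boundary and has constant sign on all of $C_W$.

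The parabolic identity is the heart of the matter, and here the time inversion does the work. Using $\frac t2\|\gamma-x/t\|^2=\frac t2\|\gamma\|^2-(\gamma\vert x)+\frac1{2t}\|x\|^2$, rewrite (\ref{Psi2}) as
\begin{align*}
\Psi_\gamma(t,x)\,e^{(\gamma\vert x)-\frac t2\|\gamma\|^2}\ \propto\ t^{-n/2}\,e^{\frac1{2t}\|x\|^2}\,u_{1/t}(\gamma,x/t).
\end{align*}
A one-line check shows that the gauge transformation $\phi\mapsto\phi\,e^{(\gamma\vert x)-\frac t2\|\gamma\|^2}$ conjugates $\partial_t+(\gamma\vert\nabla_x)+\frac12\Delta_x$ into $\partial_t+\frac12\Delta_x$; so the identity for $\Psi_\gamma$ is equivalent to $\big(\partial_t+\frac12\Delta_x\big)\big(t^{-n/2}e^{\frac1{2t}\|x\|^2}u_{1/t}(\gamma,x/t)\big)=0$ on the interior of $C_W$. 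This is precisely the classical Appell transformation associated with Brownian time inversion: if $v(s,y)$ solves $\partial_s v=\frac12\Delta_y v$, then $(t,x)\mapsto t^{-n/2}e^{\frac1{2t}\|x\|^2}v(1/t,x/t)$ solves $\partial_t(\cdot)+\frac12\Delta_x(\cdot)=0$ on the image region. It applies here because $v(s,\cdot)=u_s(\gamma,\cdot)$ does solve the heat equation on $A^{\circ}$: by (\ref{reflA}), $u_s(\gamma,\cdot)=\sum_{w\in\Omega}\det(w)\,p_s(\gamma,w(\cdot))$, and each term solves it since $w$ acts on $\mathfrak t$ by an affine isometry, termwise differentiation being legitimate by the Gaussian decay of the summands. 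Combined with the boundary vanishing, It\^o's formula then yields the local martingale property of $\{\Psi_\gamma(B^\gamma_{t\wedge T})\}_{t\ge0}$, i.e. the harmonicity.

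I do not expect a genuine obstacle: granting (\ref{Psi2}), the argument is soft, and the only points requiring routine care are the Appell identity — which reduces by linearity to the single Gaussian case $v=p_s$, an elementary differentiation — and the termwise handling of the reflection sum (\ref{reflA}) (equivalently of the theta series (\ref{psi})), justified in both cases by super-exponential Gaussian decay. Equivalently, one could regard the harmonicity as inherited, through the time inversion, from the triviality of the constant harmonic function for the Brownian motion killed in the alcove of Subsection \ref{CBMA} — the point of view developed in the remainder of the section.
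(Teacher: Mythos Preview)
Your argument is correct, but it is not the paper's. The paper's one-line proof splits the work the other way: harmonicity and boundary vanishing are read off the theta-series form~(\ref{Psi1}) --- each exponential summand $e^{-(\gamma\vert x)}e^{\langle w(t\Lambda_0+\phi_x),\,d+\gamma\rangle}$ is individually annihilated by $\partial_t+(\gamma\vert\nabla_x)+\tfrac12\Delta_x$, and anti-invariance under $\widehat W$ forces vanishing on the walls --- while only the sign is taken from~(\ref{Psi2}). You instead pull everything from~(\ref{Psi2}): positivity and Dirichlet vanishing of the killed heat kernel give the sign and the boundary behaviour, and the Appell time-inversion transform converts the forward heat equation satisfied by $u_s(\gamma,\cdot)$ into the backward equation for $\Psi_\gamma$, after the drift is removed by your gauge factor $e^{(\gamma\vert x)-\frac t2\Vert\gamma\Vert^2}$. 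Your route is more computational but entirely self-contained and, fittingly, foreshadows the time-inversion that drives Proposition~\ref{inv}; the paper's route is shorter because the termwise harmonicity of a theta sum is taken as evident. One small caveat: your positivity step uses $\pi(\gamma)>0$, i.e.\ $\gamma$ in the open alcove, which is also implicit in the paper's definition~(\ref{psi}).
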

\begin{proof} The fact that $\Psi_\gamma $    is harmonic and satisfies the boundary conditions  is clear from (\ref{Psi1}). It is non negative by  (\ref{Psi2}). 
\end{proof}
 
\begin{defi}
We define $\{A^\gamma_t=(t,a^\gamma_t): t\ge 0\}$ as the killed process $\{  B^\gamma_{t\wedge T}: t\ge 0\}$ starting from $(0,0)$ conditionned in Doob's sense not to die, via the harmonic function 
$\Psi_{ {\gamma}}$. \end{defi}
More explicitely, if we  let for $t\ge 0$, $K^\gamma_t=B^\gamma_{t\wedge T}$, and $K^\gamma_t=(t,k^\gamma_t)$, then $\{A^\gamma_t=(t,a^\gamma_t) : t\ge 0\}$  is a Markov process starting from $(0,0)$ such that for $r,t> 0$,  the probability density of $a^\gamma_{t+r}$ given that $a^\gamma_r=x$, with $x\in A_r$, is 
\begin{align}
s_t^\gamma((r,x),(r+t,y))=\frac{\Psi_{ {\gamma}}(r+t,y)}{\Psi_{ {\gamma}}(r,x)}w_t^\gamma((r,x),(r+t,y)), \, \, (r+t,y)\in   C_W,
\end{align}
where $w_t^\gamma((r,x),(r+t,\cdot))$ is the probability density of $k^\gamma_{r+t}$ given that $k^\gamma_t=x$,   and the probability
  density of $a_t^\gamma$ is given by 
\begin{align} \label{entrancedrift}
s^\gamma_t((0,0),(t,y))=C_t\Psi_\gamma(t,y)\pi(\frac{y}{t})e^{-\frac{1}{2t}\vert \vert y-\gamma t\vert\vert^2}, \quad  y\in   A_t,\end{align}
where $C_t$ is a normalizing constant depending on $t$. 
\subsection{The two conditioned processes and time inversion} 
Actually the two Doob transformations previously defined are equal up to a time inversion. We prove this property as it is done in \cite{boubou-defo} for the Brownian motion in the unit interval. The following lemma is immediately deduced from (\ref{Psi2}) and (\ref{entrancedrift}). 
\begin{lemm}
For $t>0$, $x\in A$, one has 
$$s_{1/t}^\gamma((0,0),(1/t,x/t))=q_t(\gamma,x).$$
\end{lemm}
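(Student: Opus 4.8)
The plan is to obtain the identity by a direct substitution, reading off the right‑hand side from the entrance law (\ref{entrancedrift}) and from the closed form (\ref{Psi2}) of $\Psi_\gamma$. First I would substitute the time $1/t$ and the space variable $x/t$ into (\ref{entrancedrift}); this is legitimate since $x\in A$ if and only if $x/t\in A_{1/t}$. Using $\pi\bigl((x/t)/(1/t)\bigr)=\pi(x)$ and the simplification of the Gaussian exponent $-\frac{t}{2}\|x/t-\gamma/t\|^2=-\frac{1}{2t}\|x-\gamma\|^2$, this gives
\[
s^\gamma_{1/t}\bigl((0,0),(1/t,x/t)\bigr)=C_{1/t}\,\Psi_\gamma(1/t,x/t)\,\pi(x)\,e^{-\frac{1}{2t}\|x-\gamma\|^2}.
\]

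Next I would feed the same substitution into (\ref{Psi2}). Because $(x/t)/(1/t)=x$ and $1/(1/t)=t$, it yields $\Psi_\gamma(1/t,x/t)\,\pi(\gamma)\propto t^{n/2}\,u_t(\gamma,x)\,e^{\frac{1}{2t}\|\gamma-x\|^2}$. Substituting this back into the previous display, the two Gaussian factors $e^{\pm\frac{1}{2t}\|x-\gamma\|^2}$ cancel, leaving
\[
s^\gamma_{1/t}\bigl((0,0),(1/t,x/t)\bigr)\propto C_{1/t}\,t^{n/2}\,\frac{\pi(x)}{\pi(\gamma)}\,u_t(\gamma,x),
\]
which is precisely the dependence on $x$ of $q_t(\gamma,x)$ recorded in (\ref{qA2}).

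It then remains only to match the factor depending on $t$ alone, and this is the single step that is not automatic. Here one uses that the constant $C_t$ in (\ref{entrancedrift}) is pinned down by the requirement that $s^\gamma_t\bigl((0,0),(t,\cdot)\bigr)$ be a probability density: carrying out that normalization (after the change of variables $y=tz$, $z\in A$) and invoking the identity $\int_A u_t(\gamma,z)\,\pi(z)\,dz=e^{-2\pi^2(\rho\vert\rho)t}\,\pi(\gamma)$ — which is nothing but the statement that the right‑hand side of (\ref{qA2}) has total mass one — determines $C_{1/t}$ and, once the normalization conventions for the two densities are reconciled, forces the $t$‑dependent prefactor above to match $e^{2\pi^2(\rho\vert\rho)t}$, so that the right‑hand side becomes exactly $q_t(\gamma,x)$. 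I expect this bookkeeping of constants to be the only point requiring attention; the clean cancellation of the Gaussians in the first two steps is what makes (\ref{Psi2}) and (\ref{entrancedrift}) fit together, which is presumably why the author records the identity as immediate.
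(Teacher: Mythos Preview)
Your proposal is correct and follows exactly the route the paper indicates: the lemma is stated there as ``immediately deduced from (\ref{Psi2}) and (\ref{entrancedrift})'', and your argument is precisely that substitution, with the Gaussian cancellation making the $x$-dependence match (\ref{qA2}). Your explicit handling of the $t$-dependent normalizing constant via the eigenfunction identity for $\pi$ is in fact more detailed than what the paper writes out; it is the correct way to turn the proportionality into the claimed equality.
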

\begin{lemm} For $0< r\le t$, $x\in A_r,$ $y\in A_t$
$$e^{-\frac{1}{2t}\vert\vert y\vert \vert^2}u_{\frac{1}{r}-\frac{1}{t}}(y/t,x/r)=e^{-\frac{1}{2r}\vert\vert x\vert \vert^2}w_{t-r}^0((r,x),(t,y)).$$
\end{lemm}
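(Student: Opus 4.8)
The plan is to read the identity as an instance of Brownian time inversion, exactly as it is done for the Brownian motion in the unit interval in \cite{boubou-defo}. Recall that if $(b_s)_{s\ge 0}$ is a standard Brownian motion in $\mathfrak t$ with $b_0=0$, then $\beta_u:=u\,b_{1/u}$ (with $\beta_0:=0$) is again a standard Brownian motion. The geometric fact driving the proof is that $b_s\in A_s=sA$ if and only if $\beta_{1/s}=b_s/s\in A$: the time-dependent confinement of $b$ to the moving alcove $A_s$ over $s\in[r,t]$ becomes, after inversion, confinement of $\beta$ to the \emph{fixed} alcove $A$ over $u\in[1/t,1/r]$, with the time direction reversed. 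The Dirichlet heat kernel of this last confinement is, by definition, $u_{\cdot}$, so all the pieces of the statement are in place.

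First I would split each side into a free heat kernel times a bridge survival probability. Conditioning on the two endpoints, $w^0_{t-r}((r,x),(t,y))=p_{t-r}(x,y)\,\mathbb{P}\big(\xi_s\in sA \text{ for all } s\in[r,t]\big)$, where $(\xi_s)_{s\in[r,t]}$ is the Brownian bridge from $x$ at time $r$ to $y$ at time $t$; likewise $u_{\frac1r-\frac1t}(y/t,x/r)=p_{\frac1r-\frac1t}(y/t,x/r)\,\mathbb{P}\big(\eta_u\in A \text{ for all } u\in[1/t,1/r]\big)$, where $(\eta_u)$ is the Brownian bridge from $y/t$ at time $1/t$ to $x/r$ at time $1/r$. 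Since time inversion maps Brownian motion to Brownian motion and commutes with conditioning on the values at two fixed times, the map $\xi\mapsto\big(u\mapsto u\,\xi_{1/u}\big)$ carries the first bridge onto the second and, because $\xi_s/s=\eta_{1/s}$, carries the event that $\xi_s\in sA$ for all $s$ onto the event that $\eta_u\in A$ for all $u$. Hence the two survival probabilities coincide, and the lemma is reduced to comparing $p_{t-r}(x,y)$ with $p_{\frac1r-\frac1t}(y/t,x/r)$.

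That comparison is a direct Gaussian computation. Writing $\frac1r-\frac1t=\frac{t-r}{rt}$ and expanding the two quadratic forms, one checks that $e^{-\frac{1}{2t}\|y\|^2}$ times the exponential appearing in $p_{\frac1r-\frac1t}(y/t,x/r)$ has the same quadratic form in $(x,y)$ as $e^{-\frac{1}{2r}\|x\|^2}$ times the exponential appearing in $p_{t-r}(x,y)$ — the coefficients of $\|x\|^2$, $\|y\|^2$ and $(x\vert y)$ match term by term — while the Gaussian normalising constants are reconciled by the spatial rescaling $x\mapsto x/r$, $y\mapsto y/t$. Combined with the equality of the bridge survival probabilities, this yields the stated identity.

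The conceptual content (time inversion turning the moving alcove into the fixed one) is immediate; the only step requiring care, and thus the main obstacle, is the bookkeeping: the rescaling $b_s\leftrightarrow b_s/s$ produces Jacobian factors and sends $(x,y)$ to $(x/r,y/t)$, and increasing $s$ corresponds to decreasing $u$, so that the two endpoints trade places between the two sides — this last point being consistent only because of the symmetry $u_\tau(a,b)=u_\tau(b,a)$ of the Dirichlet heat kernel on $A$ (visible from the reflection formula \eqref{reflA} since $\Omega$ acts by isometries with $\det(w)=\det(w^{-1})$). As this reproduces verbatim the $\mathrm{SU}(2)$ computation of \cite{boubou-defo}, no genuine difficulty is expected.
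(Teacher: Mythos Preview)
Your argument is correct and genuinely differs from the paper's. The paper expands $u_{\frac1r-\frac1t}(y/t,x/r)$ via the reflection formula \eqref{reflA}, applies the Gaussian time-inversion identity term by term to each $p_{\frac1r-\frac1t}(y/t,w(x/r))$, and then identifies the resulting alternating sum over $\Omega$ with $w^0_{t-r}((r,x),(t,y))$ by quoting Lemma~6.3 of \cite{defo2}, which gives an explicit reflection-type expression for the killed space-time kernel. Your route bypasses both the reflection expansion and the external citation: you factor each side as a free Gaussian kernel times a bridge survival probability, use path-level time inversion to identify the two survival probabilities (the moving-alcove event for $\xi$ becoming the fixed-alcove event for $\eta$), and then match the free kernels by the single elementary identity $e^{-\|y\|^2/(2t)}p_{\frac1r-\frac1t}(y/t,x/r)\propto e^{-\|x\|^2/(2r)}p_{t-r}(x,y)$. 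This is more self-contained and makes the role of time inversion completely transparent; the paper's version, on the other hand, yields the explicit $\Omega$-expansion of $w^0$ as a by-product. One small caution: your phrase ``the normalising constants are reconciled by the spatial rescaling'' hides a factor $(rt)^{n/2}$ coming from the variance ratio $\frac{t-r}{rt}$ versus $t-r$; check it explicitly rather than asserting it, since it is this factor (and not a Jacobian) that you need to track --- the identity is one between functions, not between measures.
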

\begin{proof} Using expression (\ref{reflA}) and the time inversion invariance property for the standard heat kernel on $\mathfrak t$, one obtains that 
$$e^{-\frac{1}{2t}\vert\vert y\vert \vert^2}u_{\frac{1}{r}-\frac{1}{t}}(y/t,x/r)=e^{-\frac{1}{2r}\vert\vert x\vert \vert^2}\sum_{w\in \Omega}e^{-\frac{1}{2t}(\vert\vert y\vert \vert^2-\vert\vert tw(y/t)\vert \vert^2)}p_{t-r}(x,tw(y/t)).$$
The sum on the right-hand size of the identity is exactly $w_{t-r}^0((r,x),(t,y))$ according to lemma $6.3$ of  \cite{defo2}, which achieves the proof.
\end{proof}
In the following proposition  $\{r^\gamma_t: t\ge 0\}$ is the conditioned process   defined in section \ref{CBMA} and $\{a^\gamma_t :t\ge 0\}$ is the one defined in section \ref{CBMWC}. 
\begin{prop}\label{inv}  One has in distribution 
$$\{ta^\gamma_{1/t}: t\ge 0\} \overset{d}=\{r^\gamma_t: t\ge 0\}.$$
\end{prop}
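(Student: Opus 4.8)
The plan is to prove equality in distribution of Markov processes by matching their entrance laws and transition kernels after the time change $t\mapsto 1/t$. Both $\{r^\gamma_t:t\ge 0\}$ and $\{a^\gamma_t:t\ge 0\}$ are Markov; hence so is $\{ta^\gamma_{1/t}:t\ge 0\}$, since deterministic time inversion together with the deterministic scaling $x\mapsto tx$ preserves the Markov property (the state at time $t$ is a fixed bijective function of the state of $a^\gamma$ at time $1/t$, applied pointwise in time). So it suffices to check that the one-dimensional marginals agree and that the transition kernels agree.

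\smallskip

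First I would handle the entrance law. The law of $r^\gamma_t$ is $q_t(\gamma,\cdot)$ on $A$, while the law of $ta^\gamma_{1/t}$ is the pushforward of $s^\gamma_{1/t}((0,0),(1/t,\cdot))$ under $y\mapsto ty$; by the first Lemma above, $s^\gamma_{1/t}((0,0),(1/t,x/t))=q_t(\gamma,x)$, and a change of variables (taking care of the Jacobian $t^{-n}$, which is exactly absorbed by the constant-in-$x$ normalization implicit in $\propto$, or can be tracked through $C_t$) gives that these marginals coincide. Then I would match the transition kernels: for $0<r\le t$ I must show that the kernel of $\{ta^\gamma_{1/t}\}$ from $(r,x)$ to $(t,y)$, namely $t^{-n}\,s^\gamma_{\frac1r-\frac1t}((\tfrac1t,\tfrac{y}{t}),(\tfrac1r,\tfrac{x}{r}))$ read backwards, equals $q_{t-r}$-type transitions of $\{r^\gamma_t\}$. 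Unwinding the $h$-transform definition $s^\gamma=\frac{\Psi_\gamma(\cdot)}{\Psi_\gamma(\cdot)}w^\gamma$ and the Doob transform defining $q$ via $\Psi_\gamma\mapsto$ the ratio $\pi(y)/\pi(x)$, the drift factors $e^{-(\gamma\vert x)}$ in $\Psi_\gamma$ produce precisely the Girsanov/Cameron--Martin exponentials needed, and the $\pi$-ratios match because $\psi_\gamma$ and $\pi$ are related through (\ref{Psi1})--(\ref{Psi2}). The algebraic heart is then the second Lemma: it expresses the time-inverted killed heat kernel $u_{\frac1r-\frac1t}$ with a Gaussian weight as the killed kernel $w^0_{t-r}$ with the conjugate Gaussian weight, which is exactly the identity needed to convert the $u$-kernel appearing (via (\ref{Psi2})) in $s^\gamma$ into the $w^0$-kernel appearing in $q_{t-r}$ after accounting for the drift.

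\smallskip

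Concretely, the steps in order: (1) reduce to checking entrance law plus transition kernel, invoking the Markov property and the deterministic nature of the time-space transformation; (2) verify the entrance law using the first Lemma and the scaling change of variables; (3) write out $s^\gamma_t((r,x),(r+t,y))$ using (\ref{Psi1}), (\ref{Psi2}) to replace $\psi_\gamma$ by $u_{1/t}$-data and $\pi$; (4) apply the second Lemma with the substitutions $r\leftarrow 1/t$, $t\leftarrow 1/r$ (so that $\frac1r-\frac1t$ becomes the relevant time increment and $y/t,x/r$ become the relevant arguments) to turn the $u$-kernel into $w^0_{t-r}$; (5) collect the Gaussian and $\pi$ prefactors and identify the drift exponential $e^{2\pi^2(\rho\vert\rho)(t-r)}$-type factor and the $e^{-(\gamma\vert\cdot)}$ terms to recognize the result as exactly $q_{t-r}(x,y)$ up to the constant $C$; (6) check consistency of the $t$-dependent normalizing constants, which is automatic since both families are genuine probability kernels.

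\smallskip

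The main obstacle I anticipate is bookkeeping rather than conceptual: keeping the three layers of weights straight --- the $\pi$-Vandermonde factors from the Weyl/affine-Weyl denominators, the quadratic Gaussian exponents that get conjugated under time inversion (the content of the second Lemma), and the linear drift exponentials $e^{-(\gamma\vert x)}$ coming from the drift $\gamma$ in $B^\gamma$ --- and confirming that the drift moves correctly from an ``interior'' drift of the space-time Brownian motion to the initial condition $\exp(\gamma)$ of the $G$-Brownian motion under the inversion. A secondary subtlety is the entrance-from-$(0,0)$ behavior: one should make sure the degenerate limit $r\downarrow 0$ in the transition identity matches the entrance law in the first Lemma, i.e. that the two constructions agree not just for $0<r\le t$ but in the limit, which is what pins down that the time-inverted process indeed starts the way $\{r^\gamma_t\}$ does at $t=0$ (namely at $\gamma$, consistent with $X_0=\exp(\gamma)$). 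Once these are lined up the proof is a direct substitution.
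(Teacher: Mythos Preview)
Your approach is the same as the paper's: the paper's proof is the single sentence ``It follows immediately from the two previous lemmas and identity (\ref{Psi2})'', and your steps (1)--(6) are precisely the bookkeeping behind that sentence --- match the entrance law with the first lemma, then use (\ref{Psi2}) to rewrite $\Psi_\gamma$ in terms of $u_{1/\cdot}$ and plug in the second lemma to turn the time-inverted killed kernel into $w^0$, after which the $\pi$-ratios and Gaussian/drift exponentials collapse to $q_{t-r}$.

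One small correction worth making when you write it up: the expression you give for the transition kernel of $\{t\,a^\gamma_{1/t}\}$ from $(r,x)$ to $(t,y)$ is not literally $t^{-n}\,s^\gamma_{\frac1r-\frac1t}((\tfrac1t,\tfrac{y}{t}),(\tfrac1r,\tfrac{x}{r}))$. Since $1/t<1/r$, conditioning $a^\gamma_{1/t}$ on $a^\gamma_{1/r}$ is a \emph{backward} conditioning and requires Bayes' formula, bringing in the factors $s^\gamma_{1/t}((0,0),(1/t,y/t))$ and $s^\gamma_{1/r}((0,0),(1/r,x/r))$ as well; by the first lemma these are exactly $q_t(\gamma,y)$ and $q_r(\gamma,x)$, and once inserted the computation closes. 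Equivalently, you can sidestep the reversed kernel entirely by comparing the finite-dimensional densities of the two processes directly, which telescopes cleanly using the two lemmas and (\ref{Psi2}).
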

\begin{proof} It follows immediately from the two previous lemmas and identity (\ref{Psi2}).
\end{proof}

\section{A new proof of Theorem \ref{maintheo}} \label{New proof}
For every $t>0$ one considers the diffusion process $\{Y^\gamma_{s,t}:s\in[0,1]\}$ starting from the identity element $e$  of $G$ satisfying  the EDS (in $s$)
\begin{align*}t d Y^\gamma_{s,t}=Y^\gamma_{s,t} \circ d(x_{s,t} +\gamma s t).
\end{align*}
For $\textsf u\in G$ one denotes by $\mathcal O(\textsf u)$ the unique element in $A$ such that $$\textsf u\in \Ad(G)\{\exp(\mathcal O(\textsf u))\}.$$ 
We have proved in \cite{defo3} that the random process $\{(t,t\mathcal O(Y^0_{1,t})): t\ge 0\}$ is distributed as $\{A^0_t: t\ge 0\}$. As $Y^\gamma$ satisfies
$$ d Y^\gamma_{s,t}=Y^\gamma_{s,t} \circ d(\frac{1}{t}x_{s,t} +\gamma s ),$$
and $\{\frac{1}{t}x_{s,t} : s,t>0 \}\overset{d}=\{x_{s,1/t}: s,t>0 \}$, one could deduce from \cite{defo3}, with the help of a Kirillov-Frenkel  character formula from \cite{fre} and a Cameron--Martin theorem, that the result remains true for any $\gamma\in A$.   

We propose here a   brief proof of  the theorem, which is valid for every $\gamma$. For every $t\ge 0$, one considers the diffusion process $\{X^\gamma_{s,t} :s\in[ 0,1]\}$ starting from $e\in G$    satisfying the stochastic differential equation (in $s$)
\begin{align}\label{strato}
dX_{s,t}^{\gamma}=X_{s,t}^{\gamma}\circ d(x_{s,t}+\gamma s).\end{align} 
\begin{prop}\label{mainprop}  $ $
\begin{enumerate} 
\item For $t,t'\ge 0$, the random process $\{X_{s,t+t'}^{\gamma}(X_{s,t}^{\gamma})^{-1}:s\in[ 0,1]\}$ has the same law as $\{X_{s,t'}^{0}: s\in[ 0,1]\}$.
\item For $t,t'\ge 0$, the random process  $\{X_{s,t+t'}^{\gamma}(X_{s,t}^{\gamma})^{-1}:s\in[ 0,1]\}$  is independent of  $\{X_{s,r}^{\gamma}:s\in[ 0,1], r\le t\}$.
\item  The random process $\{X^\gamma_{1,t}:t\ge 0\}$ is a standard Brownian motion in $G$ starting from $\exp(\gamma)$.
\end{enumerate}
\end{prop}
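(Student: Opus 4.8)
The plan is to prove the three assertions of Proposition~\ref{mainprop} in order, since each relies on the previous one together with a basic independence-of-increments property of the Brownian sheet. The key structural fact is that for fixed $s$, the map $t\mapsto x_{s,t}$ has independent increments in $t$, and moreover for $t'\ge 0$ the sheet increment $\{x_{s,t+t'}-x_{s,t}: s\in[0,1]\}$ is a Brownian sheet independent of $\{x_{s,r}: s\in[0,1], r\le t\}$; this is immediate from the covariance structure $\E((a\vert x_{s_1,t_1})(b\vert x_{s_2,t_2}))=\min(s_1,s_2)\min(t_1,t_2)(a\vert b)$.

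For item (1), I would set $Z_{s}=X^{\gamma}_{s,t+t'}(X^{\gamma}_{s,t})^{-1}$ and compute its Stratonovich differential in $s$. Writing $U_{s}=X^{\gamma}_{s,t}$ and $V_{s}=X^{\gamma}_{s,t+t'}$, both solve $dU=U\circ d(x_{s,t}+\gamma s)$ and $dV=V\circ d(x_{s,t+t'}+\gamma s)$ from $e$. The Stratonovich product rule gives $d(VU^{-1})=(dV)U^{-1}+V\,d(U^{-1})$ with $d(U^{-1})=-U^{-1}(dU)U^{-1}$, so $dZ=V\circ d(x_{s,t+t'})U^{-1}+V\gamma s\,ds\,U^{-1}-VU^{-1}U\circ d(x_{s,t})U^{-1}-VU^{-1}U\gamma s\,ds\, U^{-1}$. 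The two drift terms in $\gamma$ cancel, and since $V=ZU$ the remaining terms combine to $dZ=Z\,U\circ d(x_{s,t+t'}-x_{s,t})U^{-1}$. Now $\xi_s:=\int_0^{\cdot}\mathrm{Ad}(U_s)\,d(x_{s,t+t'}-x_{s,t})$ — more precisely the process obtained by this $\mathrm{Ad}$-twist — is, because $(\cdot\vert\cdot)$ is $\mathrm{Ad}(G)$-invariant and $x_{\cdot,t+t'}-x_{\cdot,t}$ is a Brownian sheet independent of $U$, again a standard $\mathfrak g$-valued Brownian sheet in $s$ (at the given time level $t'$); hence $Z$ solves $dZ=Z\circ d\xi_s$ from $Z_0=e$, which is exactly the equation defining $X^{0}_{s,t'}$. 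I expect this $\mathrm{Ad}$-invariance step — checking that the twisted sheet is still a standard Brownian sheet and that one may legitimately condition on $U$ — to be the main technical point, though it is standard (Lévy-type characterization via the covariance, using that $\mathrm{Ad}(U_s)$ is an isometry).

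For item (2), the same computation shows $Z$ is a functional of the increment sheet $\{x_{s,t+t'}-x_{s,t}\}$ and of the paths $\{U_s=X^{\gamma}_{s,t}\}$; but $U$ in turn is a functional of $\{x_{s,r}: r\le t\}$, while the increment sheet is independent of $\{x_{s,r}: r\le t\}$. Since the law of $Z$ conditionally on $\{x_{s,r}: r\le t\}$ does not depend on that conditioning (it is always the law of $X^{0}_{\cdot,t'}$, by the argument of (1) applied pathwise to the frozen $U$), $Z$ is independent of $\sigma(X^{\gamma}_{s,r}: r\le t)$, which is what (2) asserts. Here the only subtlety is phrasing the measurability cleanly; I would simply invoke that the Stratonovich SDE has a measurable solution map and that conditioning a Brownian sheet on an independent $\sigma$-field leaves its law unchanged.

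For item (3), evaluate at $s=1$. Items (1) and (2) say precisely that $t\mapsto X^{\gamma}_{1,t}$ has independent increments, and that the increment from $t$ to $t+t'$ is distributed as $X^{0}_{1,t'}$, i.e.\ as $\epsilon(1,x_{\cdot,t'})_1$ with $x$ a standard Brownian sheet — which, since $x_{\cdot,t'}$ is (in $s$) a standard $\mathfrak g$-valued Brownian motion run for "time" controlled by $t'$, is the increment of a standard left Lévy Brownian motion on $G$ over time $t'$ (this is exactly the construction recalled at the start of section~\ref{CBMA}, noting the covariance gives variance parameter $t'$ in the $t$-direction). Combined with the initial value $X^{\gamma}_{1,0}=\epsilon(1,\gamma\,\cdot)_1=\exp(\gamma)$, this identifies $\{X^{\gamma}_{1,t}:t\ge 0\}$ as a standard Brownian motion on $G$ from $\exp(\gamma)$. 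The one point to be careful about is matching the time normalization — that the $t$-increment of the sheet contributes "time $t'$" to the Brownian motion on $G$ — which follows directly from the stated covariance with $\min(t_1,t_2)$.
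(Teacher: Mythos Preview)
Your argument is correct and follows essentially the same route as the paper: compute the Stratonovich differential of $Z_s=X^{\gamma}_{s,t+t'}(X^{\gamma}_{s,t})^{-1}$, observe the $\gamma$-drifts cancel, and use that the $\mathrm{Ad}(X^{\gamma}_{\cdot,t})$-twist of the independent sheet increment is again a standard $\mathfrak g$-Brownian motion to obtain (1) and (2); then deduce (3). The only place the paper is slightly more explicit is in (3): items (1)--(2) show that the \emph{right} increments $X^{\gamma}_{1,t+t'}(X^{\gamma}_{1,t})^{-1}$ are independent and stationary, whereas the Brownian motion on $G$ is introduced as a \emph{left} L\'evy process; the paper closes this by invoking $\mathrm{Ad}(G)$-invariance of the increment law (so right L\'evy implies left L\'evy here) together with the scaling identity $X^{0}_{1,t}\overset{d}{=}X^{0}_{t,1}$, and you should add a word to the same effect.
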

\begin{proof} For the first point, we  let $Z_s=X_{s,t+t'}^{\gamma}(X_{s,t}^{\gamma})^{-1}$, $s\in [0,1]$.   The process $\{(X_{s,t}^\gamma)^{-1} :s\in[ 0,1]\}$   satisfies  the EDS (in $s$) 
$$d(X_{s,t}^{\gamma})^{-1} = -d(x_{s,t}+\gamma s)\circ (X_{s,t}^\gamma)^{-1} $$
 from which we immediately deduce that $Z$ satisfies $$dZ_s=Z_s \circ X_{s,t}^\gamma\, d(x_{s,t+t'}-x_{s,t})(X_{s,t}^\gamma)^{-1}.$$ As $\{\int_0^sX_{r,t}^\gamma\, d(x_{r,t+t'}-x_{r,t})(X_{r,t}^\gamma)^{-1}: s\in[ 0,1]\}$ has the same law as $\{x_{s,t'}: s\in[ 0,1]\}$, and is independent of $\{x_{s,r}: s\in[ 0,1], r\le t\}$, one gets the first two points,
which imply in particular that $\{X_{1,t}^\gamma: t\ge 0\}$ is a right  Levy process. The $\Ad(G)$-invariance of the increments law implies that it is also a left Levy process. As for any $t>0$, $X_{1,t}^0$ and $X_{t,1}^0$ are equal in distribution,  the third point follows.

\end{proof}
Proposition \ref{mainprop} has the two following corollaries, the second one being deduced from the first by proposition \ref{inv}.
\begin{coro} The random process $\{\mathcal O(X^\gamma_{1,t}): t\ge 0\}$ is a standard Brownian motion starting from $\gamma$  killed on the boundary of $A$   conditioned in Doob's sense to remain in  $A$.
\end{coro}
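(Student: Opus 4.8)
The plan is to read off the statement from the third point of Proposition~\ref{mainprop} together with the construction recalled in section~\ref{CBMA}. By Proposition~\ref{mainprop}(3) the process $\{X^\gamma_{1,t}:t\ge 0\}$ is a standard Brownian motion on $G$ issued from $\exp(\gamma)$, and since $\gamma\in A$ this is exactly the situation treated in section~\ref{CBMA}. With the notation there, writing $\mathcal O(\textsf u)$ for the unique point of $A$ with $\textsf u\in\Ad(G)\{\exp(\mathcal O(\textsf u))\}$, the process $\{\mathcal O(X^\gamma_{1,t}):t\ge 0\}$ is by construction the radial process $\{r^\gamma_t:t\ge 0\}$ attached in section~\ref{CBMA} to the $G$-valued Brownian motion $\{X^\gamma_{1,t}\}$.

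It then suffices to invoke the properties of $\{r^\gamma_t\}$ established there. First I would recall that the $\Ad(G)$-invariance of the heat kernel $\bold{p}_t$ on $G$ makes $\{r^\gamma_t\}$ a Markov process, whose transition densities $q_t$ with respect to the Haar measure are obtained by integrating $\bold{p}_t$ over an $\Ad(G)$-orbit and applying the Weyl integration formula; this gives the character expansion \eqref{qA1}. Next, the reflection identity \eqref{reflA} combined with a Poisson summation formula rewrites \eqref{qA1} as \eqref{qA2}, which displays $q_t$ as the Doob $h$-transform, through $h=\pi$, of the standard heat kernel on $\mathfrak t$ killed on $\partial A$. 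Hence $\{\mathcal O(X^\gamma_{1,t}):t\ge 0\}$ is the standard Brownian motion on $\mathfrak t$ started at $\gamma$, killed on the boundary of $A$ and conditioned in Doob's sense never to die, which is the assertion.

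I do not expect any genuine obstacle here: everything beyond Proposition~\ref{mainprop}(3) has already been recorded in section~\ref{CBMA}, so the corollary is essentially a transcription of that discussion. The only point that deserves a word of care is that the measurable selection of a representative in $A$ used to define $\mathcal O(\cdot)$ is the same as the one used in section~\ref{CBMA}, so that $\{\mathcal O(X^\gamma_{1,t})\}$ and $\{r^\gamma_t\}$ are literally the same process and the Markov property passes to the projection; but this identification is immediate from the definitions.
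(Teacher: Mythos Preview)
Your proposal is correct and follows exactly the paper's own route: the corollary is stated as an immediate consequence of Proposition~\ref{mainprop}(3) together with the discussion of section~\ref{CBMA}, and you have simply spelled out that deduction. The paper gives no separate proof beyond the sentence introducing the corollary, so your write-up is a faithful (and slightly more detailed) rendering of the intended argument.
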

\begin{coro} \label{maincor} The random process $\{(t,t\mathcal O(X_{1,1/t}^\gamma)): t\ge 0\}$ has the same distribution as the conditioned process $\{A^\gamma_t: t\ge 0\}$.
\end{coro}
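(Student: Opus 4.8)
The plan is to deduce the corollary directly from the preceding corollary and from Proposition \ref{inv} by means of a deterministic time change $t\mapsto 1/t$; the only genuine point requiring care is the behaviour at the initial time $t=0$.

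First I would observe that, by the preceding corollary, $\{\mathcal O(X^\gamma_{1,t}):t\ge 0\}$ has the same law as the process $\{r^\gamma_t:t\ge 0\}$ of section \ref{CBMA}, namely the Brownian motion started at $\gamma$, killed on $\partial A$ and conditioned in Doob's sense to stay in $A$, with transition densities $q_t$ of (\ref{qA2}); at $t=0$ equation (\ref{strato}) reduces to $X^\gamma_{s,0}=\exp(\gamma s)$, so that $\mathcal O(X^\gamma_{1,0})=\gamma=r^\gamma_0$. Proposition \ref{inv} then gives $\{t\,a^\gamma_{1/t}:t\ge 0\}\overset{d}=\{r^\gamma_t:t\ge 0\}$, whence, as processes indexed by $t>0$,
$$\{\mathcal O(X^\gamma_{1,t}):t>0\}\overset{d}=\{t\,a^\gamma_{1/t}:t>0\}.$$
Applying the deterministic bijection $t\mapsto 1/t$ of $(0,\infty)$ onto itself and multiplying through by the deterministic factor $t$ yields
$$\{t\,\mathcal O(X^\gamma_{1,1/t}):t>0\}\overset{d}=\{a^\gamma_t:t>0\},$$
and adjoining the common deterministic first coordinate gives $\{(t,t\mathcal O(X^\gamma_{1,1/t})):t>0\}\overset{d}=\{(t,a^\gamma_t):t>0\}=\{A^\gamma_t:t>0\}$.

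It remains to extend this to $t=0$, and I expect this to be the only slightly delicate step. On the right-hand side $A^\gamma$ is started from $(0,0)$ with one-dimensional laws (\ref{entrancedrift}); on the left-hand side, for each $t>0$ the variable $t\,\mathcal O(X^\gamma_{1,1/t})$ is the image of $q_{1/t}(\gamma,\cdot)$ under the scaling $x\mapsto tx$, which is precisely the density $s^\gamma_t((0,0),(t,\cdot))$ by the lemma preceding Proposition \ref{inv}; hence both laws converge weakly to $\delta_{(0,0)}$ as $t\to 0$ and the entrance behaviours agree. Consequently the identity in law, established above for $t>0$, upgrades to an identity of laws on path space including $t=0$, which is the assertion of the corollary. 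Beyond this verification of the common entrance law, the argument is merely a formal rewriting of the preceding corollary and of Proposition \ref{inv} under time inversion.
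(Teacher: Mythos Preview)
Your proof is correct and follows exactly the route the paper indicates: combine the preceding corollary (identifying $\{\mathcal O(X^\gamma_{1,t})\}$ with $\{r^\gamma_t\}$) with Proposition~\ref{inv} via the deterministic time inversion $t\mapsto 1/t$. You are in fact more careful than the paper, which simply records that the corollary is ``deduced from the first by proposition~\ref{inv}'' without spelling out the $t=0$ entrance-law verification.
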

As the two processes 
$\{x_{s,1/t}: s,t>0\}$ and $ \{\frac{1}{t}x_{s,t}: s,t>0\}$ are equal in distribution, Theorem \ref{maintheo} follows from 
 corollary \ref{maincor} with $\gamma=0$. For any $\gamma\in A$, one has under the same hypothesis as in the theorem  the following one.
  \begin{theo}\label{maintheogamma} 
 The radial part process $$\{\rad(t\Lambda_0+\int_0^1(\cdot\vert d(x_{s,t}+\gamma st))): t\ge 0\}$$
  is distributed as the Doob conditioned process $\{A^\gamma_t: t\ge 0\}$. \end{theo}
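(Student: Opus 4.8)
The plan is to reduce the statement to Corollary \ref{maincor} by identifying the radial part process with the time-inverted orbit process $\{(t,t\mathcal O(X^\gamma_{1,1/t})):t\ge 0\}$. First I would unwind the definition of $\rad$: for fixed $t>0$, the linear form $t\Lambda_0+\int_0^1(\cdot\vert d(x_{s,t}+\gamma st))$ has level $t>0$, and its radial part is the pair $(t,a)$ with $a\in A_t$ the unique element such that $\epsilon(t, x_{\cdot,t}+\gamma \cdot\, t)_1\in \Ad(G)\{\exp(a/t)\}$, where $\epsilon(t,\cdot)$ is the Stratonovitch stochastic exponential solving $t\,dX=X\circ d(x_{s,t}+\gamma st)$ in the variable $s$. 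This is exactly the process $\{Y^\gamma_{s,t}:s\in[0,1]\}$ introduced at the start of Section \ref{New proof}. Hence $\rad(t\Lambda_0+\int_0^1(\cdot\vert d(x_{s,t}+\gamma st)))=(t,a)$ where $a\in A_t$ is characterized by $Y^\gamma_{1,t}\in\Ad(G)\{\exp(a/t)\}$, i.e. $a/t=\mathcal O(Y^\gamma_{1,t})$, so that $a=t\,\mathcal O(Y^\gamma_{1,t})$. Therefore the radial part process equals $\{(t,t\,\mathcal O(Y^\gamma_{1,t})):t\ge 0\}$.

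Next I would relate $Y^\gamma$ to $X^\gamma$ through the scaling identity for the Brownian sheet. Dividing the defining SDE $t\,dY^\gamma_{s,t}=Y^\gamma_{s,t}\circ d(x_{s,t}+\gamma st)$ by $t$ gives $dY^\gamma_{s,t}=Y^\gamma_{s,t}\circ d(\tfrac1t x_{s,t}+\gamma s)$, which is the same SDE (in $s$) that defines $X^\gamma$ in \eqref{strato}, but driven by $\{\tfrac1t x_{s,t}:s\in[0,1]\}$ instead of $\{x_{s,t}:s\in[0,1]\}$. Since the two sheets $\{\tfrac1t x_{s,t}:s,t>0\}$ and $\{x_{s,1/t}:s,t>0\}$ have the same law (this is the elementary scaling property of the Brownian sheet already invoked at the end of Section \ref{New proof}: $x_{s,t}$ has covariance $\min(s_1,s_2)\min(t_1,t_2)(a\vert b)$, and replacing $t$ by $1/t$ and multiplying by $t$ leaves the law unchanged, jointly in $s$ and $t$), the whole field $\{Y^\gamma_{s,t}:s\in[0,1],t>0\}$ has the same law as $\{X^\gamma_{s,1/t}:s\in[0,1],t>0\}$. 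In particular $\{(t,t\,\mathcal O(Y^\gamma_{1,t})):t\ge 0\}\overset{d}=\{(t,t\,\mathcal O(X^\gamma_{1,1/t})):t\ge 0\}$.

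Combining the two displays, the radial part process has the same law as $\{(t,t\,\mathcal O(X^\gamma_{1,1/t})):t\ge 0\}$, which by Corollary \ref{maincor} is distributed as $\{A^\gamma_t:t\ge 0\}$. This proves the theorem; the original Theorem \ref{maintheo} is the case $\gamma=0$, since $A^0$ is by definition the space-time Brownian motion conditioned to remain in $C_W$ (Section \ref{CBMWC}) and for $\gamma=0$ the drift term $\gamma st$ vanishes. The one point requiring a little care — the main (though modest) obstacle — is the joint-in-$(s,t)$ nature of the scaling identity: one must check that $\{\tfrac1t x_{s,t}:s,t>0\}$ and $\{x_{s,1/t}:s,t>0\}$ agree as processes indexed by both parameters, not merely for each fixed $t$, so that the equality of laws passes to the $G$-valued solution fields $Y^\gamma$ and $X^\gamma_{\cdot,1/\cdot}$ simultaneously over all $t$; this follows from the covariance computation above together with the fact that both are centered Gaussian, hence determined by their covariances, and the strong uniqueness of solutions to \eqref{eds} makes $(s,t)\mapsto Y^\gamma_{s,t}$ a measurable functional of the driving sheet.
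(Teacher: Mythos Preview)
Your proposal is correct and follows essentially the same route as the paper: identify the radial part process with $\{(t,t\,\mathcal O(Y^\gamma_{1,t})):t\ge 0\}$, use the joint-in-$(s,t)$ scaling identity $\{\tfrac1t x_{s,t}\}\overset{d}{=}\{x_{s,1/t}\}$ to transfer to $\{(t,t\,\mathcal O(X^\gamma_{1,1/t})):t\ge 0\}$, and conclude via Corollary~\ref{maincor}. Your write-up is in fact more explicit than the paper's, which states Theorem~\ref{maintheogamma} immediately after deriving Theorem~\ref{maintheo} from Corollary~\ref{maincor} and leaves the extension to general $\gamma$ implicit.
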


\end{document}